\documentclass[12pt]{amsart}
 \usepackage{amsfonts,amssymb}

\setlength{\textheight}{9.0in}
\setlength{\textwidth}{6.0in}
\oddsidemargin 0.2in
\evensidemargin 0.2in
\topmargin 0.2in
\usepackage{mathrsfs}
\let\mathcal\mathscr

\usepackage{array}

\usepackage[all,ps,cmtip]{xy} 

\def\Z{{\bf Z}}

\def\OP2{\mathbb{OP}^2}
\def\CC{{\bf C}}
\def\PP{{\mathbb P}}

\def\ZZ{{\bf Z}}

\def\phi{{\varphi}}

\def\cX{\mathcal{X}}

\def\llra{\hbox to 10mm{\rightarrowfill}}
\def\lllra{\hbox to 15mm{\rightarrowfill}}

\def\llla{\hbox to 10mm{\leftarrowfill}}
\def\lllla{\hbox to 15mm{\leftarrowfill}}

\def\llra{\hbox to 10mm{\rightarrowfill}}
\def\lllra{\hbox to 15mm{\rightarrowfill}}

\newtheorem{lemm}{Lemma}[section]
\newtheorem{theo}[lemm]{Theorem}

\newtheorem{prop}[lemm]{Proposition}

\theoremstyle{definition}

\newtheorem{rema}[lemm]{Remark}

\theoremstyle{remark}
\newtheorem*{remark*}{Remark}
\newtheorem*{note*}{Note}

\begin{document}

\title{EPW sextics and Hilbert squares of K3 surfaces}

\author{Atanas Iliev \and Carlo Madonna}
\subjclass[2010]{14J35, 14F05,}
\thanks{Research partially supported by the grant 
MTM2012-32670 of the Spanish Ministry of Economy and Competitiveness, 
by the Research fund for 
international Professors of the Seoul National University.} 

\begin{abstract}
We prove that the Hilbert square $S^{[2]}$ 
of a very general primitively polarized K3 surface 
$S$ of degree $d(n) = 2(4n^2 + 8n + 5)$, $n \ge 1$ 
is birational to a double Eisenbud-Popescu-Walter 
sextic. 
Our result implies a positive answers, in the case when $r$ is even, 
to a conjecture of O'Grady:  
On the Hilbert square of a 
very general K3 surface of genus $r^2+2$, $r \ge 1$ there is an antisymplectic 
involution. 
We explicitly give this involution on $S^{[2]}$ in term of the corresponding EPW polarization on it.

\end{abstract}

\maketitle

\section{Introduction and motivations}

O'Grady conjectured in \cite{OG1} that on the Hilbert square of a K3 surface 
of genus $g = r^2 + 2, r \ge 0$ there exists an antisymplectic involution 
(see (4.3.3) in \cite{OG1}).

We show here the following theorem, that in particular implies that
O'Grady conjecture is true
in the case when $r$ is even.

\begin{theo}
The Hilbert square of a very general $K3$ surface of degree $d(n)=2(4n^2 +8n+5)$, $n \geq 1$ is birational to a double $EPW$ sextic.
\end{theo}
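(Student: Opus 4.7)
The strategy is to produce on $S^{[2]}$ a line bundle with exactly the Beauville--Bogomolov invariants of the natural polarization on a double EPW sextic, and then to conclude via O'Grady's characterization of such fourfolds as the generic polarized hyperk\"ahler manifolds of K3$^{[2]}$-type carrying a class of that numerical type.

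\smallskip

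\emph{Step 1 (finding the EPW class).} For $(S,H)$ very general of degree $d(n)$, $\Pic(S)=\Z H$, hence
\[
\Pic(S^{[2]})\;=\;\Z H\,\oplus\,\Z\delta,\qquad H^{2}=d(n),\ \delta^{2}=-2,\ H\cdot\delta=0,
\]
where $2\delta$ is the exceptional divisor of the Hilbert--Chow map $S^{[2]}\to S^{(2)}$ and the form is Beauville--Bogomolov. The shape of $d(n)$ is chosen so that $d(n)-2=8(n+1)^{2}$, and thus $a=1$, $b=2(n+1)$ solves $a^{2}d(n)-2b^{2}=2$:
\[
D\;:=\;H-2(n+1)\,\delta,\qquad D^{2}\;=\;d(n)-8(n+1)^{2}\;=\;2.
\]
Since $H$ is primitive in the unimodular summand $H^{2}(S,\Z)$ and $\delta$ spans the complementary rank-one summand, $D$ is primitive in $H^{2}(S^{[2]},\Z)$, i.e.\ $\mathrm{div}(D)=1$. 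These are exactly the invariants of the natural polarization $\pi^{*}\cO_{Y}(1)$ on a double EPW sextic $\pi\colon X\to Y\subset\PP^{5}$.

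\smallskip

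\emph{Step 2 (recognising the double cover).} A priori $D$ may fail to be nef on $S^{[2]}$ itself, so one must analyse the wall-and-chamber decomposition of the positive cone. Using the known classification of walls in the movable cone of a K3$^{[2]}$-type fourfold (Bayer--Macr\`\i, Hassett--Tschinkel, Mongardi), cut out by classes $v\in\Pic(S^{[2]})$ of small negative square and prescribed divisibility, the problem reduces to a short Pell-type Diophantine check in the rank-two lattice $\Pic(S^{[2]})$; the hypothesis $n\ge 1$ is enough to rule out any such class yielding a wall between $D$ and the birational K\"ahler cone. One therefore obtains a birational hyperk\"ahler model $X\isom S^{[2]}$ on which the strict transform of $D$ is ample. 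Fujiki then gives $D^{4}_{X}=3(D_{X}^{2})^{2}=12$, and Riemann--Roch on K3$^{[2]}$-type gives $\chi(D_{X})=\binom{4}{2}=6$, so $|D_{X}|$ maps $X$ two-to-one onto a sextic hypersurface $Y\subset\PP^{5}$. O'Grady's theorem identifies $Y$ with an EPW sextic, and the deck involution of $X\to Y$ realises the antisymplectic involution of O'Grady's conjecture.

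\smallskip

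\emph{Main obstacle.} The delicate step is the wall-analysis of Step~2: one must verify that no Mukai-flop or divisorial wall separates $D$ from the closure of the K\"ahler cone of some birational model, and that the resulting contraction is genuinely a double cover of a sextic rather than, say, a Lagrangian fibration or a divisorial contraction. This is a finite arithmetic check in $\Pic(S^{[2]})$ of rank two, but it must be performed uniformly in $n$ and requires control of the precise list of K3$^{[2]}$-type walls.
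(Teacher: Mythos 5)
Your Step 1 is correct and matches the arithmetic at the heart of the paper: $d(n)-2=8(n+1)^2$, so $D=H-2(n+1)\delta$ has Beauville square $2$ and divisibility $1$, which is indeed the numerical type of the EPW polarization (the paper's $\gamma=h_2-(2n+2)\delta_2$). But your overall route is the reverse of the paper's, and the reversal is where the gap lies. You start from $S^{[2]}$ and try to \emph{recognize} it as a double EPW sextic from the existence of a square-$2$, divisibility-$1$ class. That recognition is not a formal consequence of the numerical invariants: O'Grady's results (and their later refinements) say that a big and nef class of this numerical type induces a $2{:}1$ cover of an EPW sextic only for $(X,D)$ \emph{outside explicit bad loci} in the period domain (where, for instance, the map can fail to be finite of degree $2$ or the image can degenerate). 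The fourfolds you are considering lie on a Noether--Lefschetz divisor of the $20$-dimensional moduli space, so they are not ``generic'' in the sense required, and you would have to verify that this divisor is not contained in the excluded loci. You never do this; the sentence ``O'Grady's theorem identifies $Y$ with an EPW sextic'' is doing work that no cited theorem performs in this generality. Likewise, the wall-and-chamber analysis that you yourself flag as the ``main obstacle'' is asserted (``the hypothesis $n\ge1$ is enough to rule out any such class'') but not carried out, so even the existence of a birational model on which $D$ is ample and base-point-free is unproved.

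The paper avoids exactly this difficulty by arguing in the opposite direction, following Hassett's specialization method for cubic fourfolds. It starts from a K3 surface with \emph{two} degree-$10$ polarizations $f,h$ with $f\cdot h=n+10$ (Lemma 3.3), for which $S^{[2]}$ is already known to be birational to a double EPW sextic $\widetilde Y_0$ by Mukai--O'Grady, and then deforms $\widetilde Y_0$ \emph{inside the EPW family} along the divisor where the class $\delta_2=4f-9\delta$ (square $-2$, divisibility $2$) stays algebraic. The recognition criterion needed on that side --- that a K3$^{[2]}$-type fourfold whose N\'eron--Severi lattice is $\ZZ\gamma+\ZZ\delta_2$ with $\delta_2$ of half-diagonal type is birational to a Hilbert square of a K3 surface, whose degree is then read off from the discriminant $-2(8n^2+16n+10)$ --- is the soft one, available from Beauville's local Torelli plus the Bayer--Hassett--Tschinkel-type description of half-diagonal classes. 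In short: both directions require a ``recognition theorem'' at the end, the paper chooses the direction in which that theorem is cheap, and your proposal chooses the direction in which it is the hard open part of the argument. To salvage your approach you would need O'Grady's precise description of the image of the period map for double EPW sextics and a lattice-theoretic check, uniform in $n$, that the period points of $(S^{[2]},D)$ avoid its complement; as written, the proof is incomplete at its decisive step.
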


Indeed, for $d(n) = 8n^2 + 16 n + 10 = 2(4(n+1)^2+1)$, 
the genus of $S$ is $g(n)$ = $d(n)/2 + 1 = 4n^2 + 8n + 6$ = $(2n+2)^2 + 2$, 
and for $n \ge 1$, \ $r = 2n+2$ covers all even numbers $r \ge 4$,
and the antisymplectic involution is determined in term of the EPW polarization
(see Section \ref{X4}).

Notice that while the
case $r = 0$ is well known, and the case $r = 2$ is studied in detail by O'Grady
(see e.g. \S 4.3 in \cite{OG1}),
in the cases of odd $r$ very little is known: only the case 
$r = 1$ is studied in \cite{De} 
and \cite{Fe}.

To show our main result we will follow O'Grady study of the case $r=2$, considering a double EPW sextic associated to a special K3 surface of degree $10$, together with
the methods used by Hassett in \cite{Ha1}.

\

The proof of our main theorem is given in Section \ref{s:main} while notations and 
basic facts and properties of double EPW sextic are recalled in Section \ref{s:preparatory}. 


\section{Fano fourfolds $X_{10}$, EPW sextics
and K3 surfaces}
\label{s:preparatory}

\subsection{Fano fourfolds $X_{10}$}
By $X_{10}$ we denote a prime Fano fourfold of index two 
and degree 10. 
By \cite{Mu1} and \cite{Gus}, any smooth $X_{10}$ is 
either a complete intersection 
of the Grassmannian $G = G(2,5) \subset \PP^9$ 
with a hyperplane and a quadric (the 1-st type) or a double covering 
of the smooth Fano fourfold $W_5 = G(2,5)\cap \PP^7$ 
branched along a quadratic section of $W_5$ 
(the 2-nd, or the Gushel' type).  
The moduli stack ${\mathcal X}_{10}$ of smooth 
$X_{10}$ is of dimension 24, and in ${\mathcal X}_{10}$ 
the general $X_{10}$ is from the first type. 
The condition for $X_{10}$ to be of the second type 
is of codimension 2, and 
the general $X_{10}$ of the second type is a smooth 
deformation from $X_{10}$ of the first type.

Let $X$ be a  fourfold of type $\cX_{10}$. By the Hodge-Riemann bilinear relations, 
the Hodge structure on $H^4(X,\Z)$ has weight 2, and the intersection form on $X$ 
endows the 4-th integral cohomology of $X$ with a structure of the lattice 
$\Lambda = H^4(X ,\Z) = I_{22,2}$,
where $I_{22,2}$ denotes the lattice $22\langle1\rangle \oplus 2\langle-1\rangle$.
 
By \cite{DIM1}, the lattice $H^4(X ,\Z)$ contains the fixed rank two polarization sublattice  
 $\Lambda_2:=H^4(G,\Z)\vert_X$ spanned on the restrictions to $X$ of the 
 two Schubert cycles $\sigma_{1,1}$ and $\sigma_2$ on $G = G(2,5)$. 
 In the basis 
$(u,v) = (\sigma_{1,1}\vert_X,\sigma_2\vert_X-\sigma_{1,1}\vert_X)$,   
 the intersection form of the lattice 
 $$
 \Lambda_2 = H^4(G,\Z)\vert_X = \Z u + \Z v
 $$
is given by 
$$
u^2 = v^2 = 2, \ \ uv = 0.
$$     

For $X = X_{10}$, 
the primitive cohomology lattice with respect to the lattice polarization $\Lambda_2$,
or the {\em vanishing cohomology lattice} is 
$$
\Lambda_0 = H^4(X ,\Z)_{\rm van}=\Lambda_2^\bot = 2E_8\oplus 2U\oplus 2\langle 2 \rangle ,
$$
ibid. 
$\Lambda_0$ is even of signature $(20,2)$. 

  



\subsection{EPW sextics}

Eisenbud-Popescu-Walter sextics, or in short EPW sextics,  
are special hypersurfaces of degree six in $\PP^5$, first introduced in
\cite{EPW} as examples of Lagrangian degeneracy loci. These hypersurfaces are
singular in codimension two, but O'Grady realized in \cite{OG1} \cite{OG2}
that they admit smooth double covers which are irreducible holomorphic
symplectic fourfolds. We will refer to this double covering as {\it double EPW sextic}.
In fact, the first examples of such double covers were
discovered by Mukai in \cite{Mu1}, who constructed them as moduli spaces of
stable rank two vector bundles on a polarized K3 surface of degree $10$.

Moreover O'Grady showed in \cite{OG2} that the generic such double cover is a
deformation of the Hilbert square of a K3 and that the family of double EPW sextics is a locally versal family of projective deformations of such an Hilbert square of a $K3$ surface.


Let $V$ be a $6$-dimensional complex vector space and let us choose a volume-form on $V$
$$
{\rm vol} :\wedge^6 V \to \CC
$$
and let us equip
$\wedge^3 V$ with the symplectic form
$(\alpha, \beta)_V := {\rm vol}(\alpha \wedge \beta)$.

Let ${\textbf{LG}}(\wedge^3 V)$ be the symplectic Grassmannian parametrizing Lagrangian subspaces of
$\wedge^3 V$. Given a non-zero $v \in V$ let
$$
F_v := \{ \alpha \in
\wedge^3 V | v \wedge \alpha = 0 \}
$$
be the sub-space of
$\wedge^3 V$ consisting of multiples of $v$. $( \ , \ )_V$ is zero on $F_v$ and $\dim (F_v) =10$, thus $F_v \in LG(\wedge^3 V)$. 
Let
\begin{equation}\label{eq:inclusion}
F \subset \wedge^3 V \otimes {\mathcal O}_{\PP(V)}
\end{equation}
be the sub-vector-bundle with fiber $F_v$ over $[v]\in \PP(V )$. 
Then
\begin{equation}\label{eq:det}
\det F \cong 
{\mathcal O}_{\PP(V )}(-6)
\end{equation}

Given $A \in {\mathbf LG}(\wedge^3 V)$ we let
$Y_A = \{[v] \in {\PP(V)} | F_v \cap A \ne \{0\} \}$.
Thus $Y_A$ is the degeneracy locus of the map
$\lambda_A:F \to
(\wedge^3 V/A) \otimes {\mathcal O}_{\PP(V )}$
where $\lambda_A$ is given by inclusion (\ref{eq:inclusion}) 
followed by the quotient map
\begin{equation}\label{eq:vb}
\wedge^3 V \otimes {\mathcal O}_{\PP(V)} \to (
\wedge^3 V/A) \otimes {\mathcal O}_{\PP(V)}
\end{equation}
Since the vector-bundles appearing in (\ref{eq:vb}) have equal rank the determinant of $\lambda_A$ makes sense and
$Y_A = V (\det \lambda_A)$; this formula shows that $Y_A$ has a natural structure of closed subscheme
of $\PP(V)$. 
By (\ref{eq:det}) we have $\det \lambda_A \in H^0({\mathcal O}_{\PP(V)}(6))$ 
and hence $Y_A$ is either a sextic hypersurface
or $\PP(V)$. 
An {\it EPW sextic} is a sextic hypersurface in $\PP^5$ which is projectively equivalent to $Y_A$ for
some $A \in {\mathbf{LG}}(\wedge^3 V)$, and a {\it double EPW sextic}
is its associated double covering studied by O'Grady.

\subsection{Necessary conditions and negative Pell's equations} 

Next we will look for necessary conditions to have a birational map between an Hilbert square of a K3 surface of degree $d$ and a double EPW sextic.
We will follow Mukai (\cite{Mu1}).


\begin{prop}\label{ccc}
Let ${\widetilde Y} \to Y$ be a double EPW sextic which is
smooth and birational to $S^{[2]}$ for 
a primitively polarized K3 surface $S$ of degree $d = 2g-2 \ge 10$ and Picard number $1$. 
Then     
the negative Pell's equation 
$$y^2 - (g-1)x^2 = -1$$ 
has an integer solution.
\end{prop}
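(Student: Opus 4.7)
The plan is to exploit the fact that a birational map between smooth irreducible holomorphic symplectic fourfolds induces (by a theorem of Huybrechts) a Hodge isometry between their second integral cohomology lattices endowed with the Beauville--Bogomolov--Fujiki (BBF) form. Since a birational map between IHS manifolds is an isomorphism in codimension one, this isometry identifies algebraic classes with algebraic classes, so the birational identification $\widetilde Y \dashrightarrow S^{[2]}$ induces a Hodge isometry of BBF lattices sending $\Pic(\widetilde Y)$ onto $\Pic(S^{[2]})$. The strategy is then simply to locate the natural polarization of $\widetilde Y$ inside the (rank two) Picard lattice of $S^{[2]}$ and read off the resulting Diophantine condition.

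First I describe the target lattice. Since $S$ is very general primitively polarized of degree $d=2g-2$ with Picard number one, one has the classical description $\Pic(S^{[2]}) = \ZZ h \oplus \ZZ \delta$, where $h$ is induced from the polarization of $S$ and $2\delta$ is the exceptional divisor of the Hilbert--Chow morphism, with $q(h)=d$, $q(\delta)=-2$, and $(h,\delta)=0$ for the BBF form. A general class $M = x h - y \delta$ therefore satisfies $q(M) = dx^2 - 2y^2$.

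Next I compute the BBF square of the natural polarization on the double EPW sextic. Let $\pi\colon\widetilde Y \to Y \subset \PP^5$ be the degree-two cover and set $H := \pi^*\cO_Y(1)$. Since $Y$ is a sextic hypersurface in $\PP^5$, one has $H^4 = 2\cdot 6 = 12$. Applying the Fujiki relation $H^4 = 3\, q(H)^2$ (the Fujiki constant being $3$ for $K3^{[2]}$-type fourfolds) and observing that $H$ is big and nef, I conclude $q(H) = 2$.

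Finally, the Hodge isometry sends $H$ to some $M = x h - y \delta \in \Pic(S^{[2]})$ with $q(M) = q(H) = 2$, yielding $dx^2 - 2y^2 = 2$, equivalently $(g-1)x^2 - y^2 = 1$, i.e.\ the negative Pell equation $y^2 - (g-1)x^2 = -1$ has an integer solution. The main obstacle in this plan is less the algebraic manipulation than verifying the ingredients: namely, that Huybrechts' theorem does apply to give a Hodge isometry (so that $H$ indeed lands in $\Pic(S^{[2]}) = \ZZ h \oplus \ZZ \delta$ rather than somewhere larger), and that the Fujiki constant and degree computations for the EPW polarization are handled correctly.
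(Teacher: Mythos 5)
Your proposal is correct and follows essentially the same route as the paper: both identify $\Pic(\widetilde Y)$ with $\Pic(S^{[2]})=\ZZ h\oplus\ZZ\delta$ via the birational map (the paper cites Mukai's Corollary 5.9 where you invoke Huybrechts, but this is the same key step), compute $q(H)=2$ from $H^4=12$ and the Fujiki constant $3$, and read off $dx^2-2y^2=2$. No substantive difference.
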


\begin{proof}
By a result of Mukai (see \cite{Mu1} Corollary 5.9),  
if $Y$ is birational to $S^{[2]}$, then there exists an isometry 
between the Neron-Severi lattices $NS(Y) \cong NS(S^{[2]})$.
Recall that $NS(S^{[2]}) = \Z h + \Z \delta$, where $(h,h) = d = 2g-2$,
$(h,\delta) = 0$, and $(\delta,\delta) = -2$. 

Let $\pi: \widetilde{Y} \rightarrow Y$ be the double covering defined by 
the antisymplectic involution, as in \cite{OG1}, \cite{OG2}. 
The EPW polarization $\gamma$ on $\widetilde{Y}$ is the preimage 
of the hyperplane class on the EPW sextic $Y \subset \PP^5$. 
Therefore the intersection index 
$$\gamma^4 = \deg{\pi} \cdot deg(Y) = 2 \cdot 6 = 12.$$ 
Since the double EPW sextic $\widetilde{Y}$ is a deformation of a Hilbert square of 
a K3 surface (see \cite{OG2}) then the Fujiki constant $c(\widetilde{Y}) = c(S^{[2]}) = 3$,
see (1.0.1) and (4.1.4) in \cite{OG1}.
Therefore for the Beauville form $(.,.)$ on ${\rm NS}(\widetilde{Y})$ one 
will have:
$$12 = \gamma^4 = c(\widetilde{Y})(\gamma,\gamma)^2 = 3(\gamma,\gamma)^2,$$
which yields  
$$
(\gamma,\gamma) = 2.
$$

By the isometry $NS(\widetilde{Y}) \cong NS(S^{[2]})$ we can identify 
$\gamma$ with an element of $NS(S^{[2]})$, 
i.e. the birationality of $\widetilde{Y}$ with the Hilbert square of a K3 surface 
as above implies that there exist integers $x,y$ such that 
$\gamma = xh - y\delta$. 
Then 
$$
2 = (\gamma,\gamma) = (xh - y\delta , xh - y\delta) = dx^2 - 2y^2 = (2g-2)x^2 - 2y^2,
$$
from where 
$$y^2 - (g-1)x^2 = -1.
$$    
\end{proof}

\begin{rema} 
It is well known that if $p$ is prime then 
the negative Pell's equation $y^2 - px^2 = -1$ 
has a solution if and only if $p = 2$ or $p \equiv 1$ (mod 4),
see e.g. Theorem 3.4.2 in \cite{AAC}.  

Below we use the case when $p = 5$ which corresponds to 
a double EPW sextic birational to the Hilbert 
square of a K3 surface of degree 10, see \S 4.3 in \cite{OG1}. 
For $p = 5$, the minimal solution of $y^2 - 5x^2 = -1$ 
is $(y,x) = (2,1)$, 
and all solutions $(y_n,x_n)$, $n \ge 0$ to $y^2 - 5y^2 = -1$ 
are given by 
$$
2y_n = (1+2\sqrt{5})(2+\sqrt{5})^{2n}  + (1-2\sqrt{5})(2-\sqrt{5})^{2n}, 
$$
$$
2x_n = (2+1/\sqrt{5})(2+\sqrt{5})^{2n} + (2-1/\sqrt{5})(2-\sqrt{5})^{2n}, 
$$
see e.g. Theorem 3.4.1 on p.141 and the formulas on p.305 in \cite{AAC}.  
\end{rema}


\section{Double EPW sextics and Hilbert squares of K3 surfaces}\label{s:main}

\noindent We can now state main result of the paper, which is the following: 


\begin{theo}\label{X4}
The Hilbert square of a very general K3 surface of degree 
$d = d(n) = 2(4n^2+8n+5)$, $n \ge 1$  
is birational to a double EPW sextic $\widetilde{Y}$. 
\end{theo}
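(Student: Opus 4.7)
The strategy is to combine the description of double EPW sextics associated to K3 surfaces of degree $10$ due to Mukai and O'Grady (the boundary case $n=0$) with Hassett--style Noether--Lefschetz arguments in the moduli of double EPW sextics. The necessary numerical condition of Proposition~\ref{ccc} is met by the explicit Pell solution $(y,x)=(2n+2,1)$, since
$$
(2n+2)^{2}-(4n^{2}+8n+5)=-1.
$$
This solution picks out a distinguished class on $S^{[2]}$ for a primitively polarized K3 surface $S$ of degree $d(n)$: in the standard basis $h,\delta$ of $\NS(S^{[2]})$, set
$$
\gamma:=h-(2n+2)\delta.
$$
Then $(\gamma,\gamma)=d(n)-2(2n+2)^{2}=2$ and the Fujiki relation forces $\gamma^{4}=3(\gamma,\gamma)^{2}=12$. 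These are exactly the invariants of the pullback of the hyperplane class under a double cover $\widetilde Y\to Y\subset\PP^{5}$, so $\gamma$ is the unique candidate for an EPW polarization on $S^{[2]}$.

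The plan is then to run a deformation argument inside the $20$-dimensional moduli space $\cM_{\mathrm{EPW}}$ of double EPW sextics. Define a Noether--Lefschetz divisor $\cN\cL_{n}\subset\cM_{\mathrm{EPW}}$ consisting of points whose Picard lattice contains the EPW polarization $\gamma$ together with a primitive class $h$ with $(h,h)=d(n)$ and pairing with $\gamma$ dictated by the Pell solution, i.e.\ $(\gamma,h)=d(n)$. A short lattice calculation reversing the one above shows that the saturation of $\Z\gamma+\Z h$ inside the Beauville--Bogomolov lattice is a rank-two overlattice of index $2n+2$ isometric to the standard Hilbert-square Picard lattice $\Z h\oplus\Z\delta$ of a K3 surface of degree $d(n)$. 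Consequently $\cN\cL_{n}$ is a $19$-dimensional sublocus whose very general Hodge point corresponds to the Hilbert square of such a K3 surface.

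Two things then remain, and the second is the main obstacle. First, one must confirm that $\cN\cL_{n}$ is non-empty, irreducible, and meets the closure of the classical Mukai/O'Grady family of double EPW sextics arising from genus--$6$ K3 surfaces; this will follow from surjectivity of the period map for IHS fourfolds of $K3^{[2]}$-type, together with the irreducibility of the moduli of polarized K3 surfaces of fixed degree and a dimension count. Second---and this is the hard part---one has to upgrade the Hodge-theoretic identification to an actual birational equivalence $S^{[2]}\dashrightarrow\widetilde Y$ in both directions. The Verbitsky--Markman global Torelli theorem provides birationality between very general members once one produces a Hodge isometry, but one must still verify that $\gamma$ remains in the movable cone of $S^{[2]}$ throughout the deformation from the $n=0$ situation, so that the linear system $|\gamma|$ really defines a rational map factoring through an EPW sextic and not some other singular sextic or a degenerate target. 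Controlling the walls of the movable cone along the deformation, and matching the image with the EPW geometry of \cite{OG2}, is where Hassett's techniques (Noether--Lefschetz stability of the special class plus a careful analysis of the associated rational map to $\PP^{5}$) must be brought to bear.
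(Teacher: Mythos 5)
Your numerics are exactly right --- the Pell solution $(y,x)=(2n+2,1)$, the class $\gamma=h-(2n+2)\delta$ with $(\gamma,\gamma)=2$, and the index-$(2n+2)$ overlattice computation all match what the paper does --- and your overall Hassett-style Noether--Lefschetz strategy in the $20$-dimensional moduli of double EPW sextics is the one the paper follows. But your write-up stops exactly where the proof has to start, and the two steps you defer are not filled by the tools you name. First, non-emptiness of $\cN\cL_n$ \emph{inside the EPW family} does not follow from ``surjectivity of the period map'': the period map of the $20$-dimensional family of double EPW sextics is not surjective onto the relevant period domain (there are excluded Noether--Lefschetz loci in O'Grady's work), so a priori your divisor could miss the EPW locus entirely; a dimension count cannot rule this out. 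Second, you correctly identify the passage from a Hodge isometry to an actual birational map (movable-cone control, global Torelli) as the main obstacle, and then leave it open.

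The paper closes both gaps with a single construction that your proposal is missing: a K3 surface $S$ with $\NS(S)=\Z f+\Z h$, $(f,f)=(h,h)=10$, $(f,h)=n+10$ (Lemma \ref{bbb}, an existence statement proved via Looijenga--Peters and the surjectivity of the \emph{K3} period map, where surjectivity is genuinely available). Via the degree-$10$ polarization $h$, Mukai--O'Grady make $S^{[2]}$ birational to an explicit double EPW sextic with EPW polarization $\gamma=h-2\delta$, while the second class $f$ supplies the algebraic $(-2)$-class $\delta_2=4f-9\delta$ with $(\gamma,\delta_2)=4n+4$. This one example is an explicit point of your Noether--Lefschetz locus lying in the EPW family by construction. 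One then deforms it \emph{inside} the EPW family keeping $\delta_2$ algebraic: by Beauville's local deformation results (and Hassett's Theorem 6.1.2) this is a $19$-dimensional divisor along which the fourfold remains birational to the Hilbert square of a K3 surface, whose degree is read off from the discriminant of $\Z\gamma+\Z\delta_2$, namely $\tfrac12\lvert 2\cdot(-2)-(4n+4)^2\rvert=8n^2+16n+10=d(n)$. Because the deformation never leaves the EPW family and never leaves the ``birational to a Hilbert square'' locus, no global Torelli theorem and no movable-cone analysis are needed. Without this anchor point and the deformation from it, your proposal remains a plan rather than a proof.
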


The proof of Theorem \ref{X4} uses similar methods used by Hassett in \cite{Ha1} to show a (stronger, in some sense) similar result for the variety of lines on a cubic fourfold.
Our main observation is that the same approach can be used also in the case of 
double EPW sextics. We divide the proof into several parts:

\subsection{The birational involution on $S^{[2]}$
for a K3 surface $S$ of degree 10}\label{birinv}

For a K3 surface $S$ with a polarization $f$ of degree 
$f^2 = d = 2g-2$ and Picard number $1$, any curve $C \in |f|$ defines a 
divisor $F_C = \{ \xi \in S^{[2]}: Supp(\xi) \cap C \not=\emptyset \}$
on $S^{[2]}$. All divisors $F_C$ belong to the same 
class $f \in {\rm NS}(S^{[2]})$. We use the same notation 
for the class $f$ and for the polarization $f$ on $S$. 
The class of the diagonal  
$\Delta = \{ \xi \in S^{[2]}: Supp(\xi) = {\rm point} \}$ 
is divisible by two in ${\rm NS}(S^{[2]}$), and if 
$\Delta = 2 \delta$ then  
$${\rm NS}(S^{[2]})  = \ZZ f + \ZZ \delta.$$ 

If $(.,.)$ is the Beauville form on ${\rm NS}(S^{[2]})$, 
then 
$$(f,f) = d, \ (f,\delta) = 0, \ (\delta,\delta) = -2.$$

If on $S$ there is a polarization $f$ of degree $d = 10$,
then there exists a birational involution 
$$
j: S^{[2]} \rightarrow S^{[2]}. 
$$ 
For the general pair $(x,y)$ of points on the general $S$ 
the involution $j$ can be described geometrically as follows 
(for more detail see \cite{OG1}):

Let $G = G(2,5) = G(1:\PP^4) \subset \PP^9$ be the grassmannian of 
lines in $\PP^4$.  
By \cite{Mu1}, the general smooth K3 surface $S$ of degree $10$ 
is a quadratic section $S = V_5 \cap Q$ 
of the unique smooth del Pezzo threefold $V_5 =  G \cap \PP^6$, 
which is a prime Fano threefold of index 2 and 
degree 5. 
By the general choice of $S \subset V_5$, the 
general non-ordered pair of points $(x,y)$ on $S \subset V_5$ 
is a general pair of points on $V_5$. The del Pezzo threefold $V_5$ has 
the property that through the general pair of 
points on $V_5$ passes a unique conic $q = q_{x,y}$. 
Indeed, let $l_x, l_y$ be the two lines in $\PP^4$ 
representing the points $x,y \in V_5 \subset G = G(1:\PP^4)$.
By the general choice of $x,y$, the lines $l_x$ and $l_y$ do not 
intersect each other and span a 3-space $\PP^3_{x,y} \subset \PP^4$.
Any conic $q \subset G$ which passes through $x$ and $y$ 
lies in the Pl\"ucker quadric $G(2,4)_{x,y} = G(1:\PP^3_{x,y}) \subset G$. 
In addition, since $V_5 = G \cap \PP^6$ then any conic on $V_5$ 
which passes through $x$ and $y$ lies on the codimension 3 subspace 
$\PP^6 \subset \PP^9 = Span(G)$. Therefore the set of conics on $V_5$ 
which pass through $x$ and $y$ sweep out the intersection 
$q_{x,y} = G(2,4)_{x,y} \cap \PP^6$, which by 
the general choice of $x,y$ is a codimension 3 linear section of
the 4-dimensional quadric 
$G(2,4)_{x,y}$, i.e. a conic. Since $S = V_5 \cap Q$ 
is a quadratic section of $V_5$, the conic $q_{x,y}$ intersects $S$
at $x,y$ and a pair of other 2 points $x',y'$. This defines a 
birational involution
$$
j: S^{[2]} \longrightarrow S^{[2]} \ \ j(x,y) = (x',y').
$$ 

Let  $r = f - 2\delta \in {\rm NS}(S^{[2]}) = \ZZ f + \ZZ \delta$.
Then 
$$
(r,r) = (f - 2\delta, f - 2\delta) = (f,f) + 4(\delta,\delta) = 2.
$$
By Propositions 4.1 and 4.21 in \cite{OG1}, on ${\rm NS}(S^{[2]}) = \ZZ f + \ZZ \delta$ 
the involution $j$ is given by the reflection with respect to $r$   
$$
j: z \mapsto j(z) = -z + (z,r)r = -z + (z,f - 2\delta)(f-2\delta). 
$$
We keep the same notation for the involution $j$ on $S^{[2]}$ 
and the involution $j$ on ${\rm NS}(S^{[2]})$.
In particular, 
$$
j(f) = -f + (f, f - 2\delta)(f-2\delta)  
          = -f + 10(f-2\delta) = 9f - 20\delta ,
$$
$$
j(\delta) = -\delta + (\delta, f - 2\delta)(f-2\delta)  
          = -\delta + 4(f-2\delta) = 4f - 9\delta.
$$

\subsection{The Hilbert square of a K3 surface of degree 10 as 
a double EPW sextic}\label{h}
Let $S \subset V_5 \subset G = G(1:\PP^4)$ be a very general K3 surface 
with a polarization $h$ of degree 10, 
where $V_5 = G \cap \PP^6$ is as above.
By \cite{OG1}, \cite{OG2}, the Hilbert square 
$S^{[2]}$ is a special case (as a birational equivalence class) of a double EPW sextic. 
The double covering 
is defined by the 
involution $j$ on $S^{[2]}$, and can be described 
as follows. 
 
Let $\PP^5 = |I_S(2)|$ be the projective space of quadrics in $\PP^6$ 
which contain $S \in |{\mathcal O}_{V_5}(2)|$. 
In $\PP^5$, the quadrics which contain $V_5$ form a hyperplane
identified with the space of Pfaffian quadrics.
Let $\xi \in S^{[2]}$, and let $\PP^1_{\xi} = Span(\xi)$.
Then  $\xi$ defines a hyperplane 
$$
\PP^4_{\xi} = |I_{S\cup \PP^1_{\xi}}(2)| \subset |I_S(2)| = \PP^5.
$$  
If $S$ does not contain lines, which is the general case, 
then the map 
$$
\pi: S^{[2]} \rightarrow \check{\PP}^5, \ \xi \mapsto \PP^4_{\xi} 
$$
is well defined for any $\xi \in S^{[2]}$. 
The map $\pi$ is (generically) the double covering defined by the 
involution $j$. We shall show only that the images of two 
involutive elements by $\pi$ coincide; 
for more detail see \cite{OG1} and \cite{Mu1}. 
Indeed, if $j(\xi)$ is the involutive 
of $\xi$, then the lines $\PP^1_{\xi} = Span(\xi)$
and $\PP^1_{j(\xi)} = Span(j(\xi))$ intersect each other, 
since by construction of $j(\xi)$,
$\xi + j(\xi)$ lie on a conic -- see above.
Since the lines $\PP^1_{\xi}$ and $\PP^1_{j(\xi)}$
are bisecant or tangent to $S$ and intersect each other, 
any quadric which contains $S$ together with one 
of these two lines contains also the other line. 
By the definition of $\pi$, the last yields that the images 
$\pi(\xi)$ and $\pi(j(\xi))$ coincide. 

By \S 4.3 in \cite{OG1}, 
the image $Y_0 \subset \check{\PP}^5$ of the double covering $\pi$ 
is an EPW sextic, defining the double EPW sextic   
$$
\widetilde{Y}_0 \rightarrow Y_0,
$$
which is birational to the Hilbert square $S^{[2]}$,
see also Theorem 4.15 in \cite{OG3}. 

\medskip

In the sequel we will also need the following result of O'Grady: 

\begin{lemm}\label{(F)} {\rm (see Proposition 4.21 and Corollary 5.21 in \cite{OG1})}. 
The class $\gamma$ of the EPW polarization 
$\pi^*({\mathcal O}_{Y_0}(1)) \in {\rm NS}(S^{[2]}) = \ZZ h + \ZZ\delta$ 
is $\gamma = h - 2\delta$.
\end{lemm}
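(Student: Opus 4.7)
The approach is to pin down $\gamma$ by combining three independent constraints: $j$-invariance, the value of $(\gamma,\gamma)$, and positivity.

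First, since $\pi\colon S^{[2]}\to\check\PP^5$ is (generically) the double cover whose covering involution is $j$ --- the construction in \S\ref{h} exhibits $\pi(j(\xi)) = \pi(\xi)$ on an open set --- the class $\gamma = \pi^*\cO_{Y_0}(1)$ must satisfy $j^*\gamma = \gamma$. Writing $\gamma = ah + b\delta$ with $a,b \in \ZZ$ and applying the explicit formulas $j(h) = 9h - 20\delta$ and $j(\delta) = 4h - 9\delta$ recalled at the end of \S\ref{birinv}, the identity $j^*\gamma = \gamma$ collapses to the single linear relation $b = -2a$. Hence $\gamma = a(h - 2\delta)$ for some integer $a$.

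Next, I would compute $(\gamma,\gamma)$ exactly as in the proof of Proposition \ref{ccc}: the morphism $\pi$ has degree $2$ onto the sextic $Y_0$, so $\gamma^4 = 2\cdot 6 = 12$, and the Fujiki relation $\gamma^4 = 3(\gamma,\gamma)^2$ on the deformation class of a Hilbert square forces $(\gamma,\gamma) = 2$. On the other hand $(h-2\delta,h-2\delta) = (h,h) + 4(\delta,\delta) = 10 - 8 = 2$, so $a^2 = 1$, i.e.\ $a = \pm 1$.

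The only remaining step is to fix the sign of $a$, and this is the one place where the argument is not purely formal, although still routine. Since $\cO_{Y_0}(1)$ is ample on $Y_0$, its pullback $\gamma$ is nef on $S^{[2]}$, so $\gamma$ lies in the closure of the positive cone of the Beauville form; as the polarization $h$ lies in the open positive cone, standard positivity for irreducible holomorphic symplectic manifolds yields $(\gamma, h) \ge 0$. Since $(h - 2\delta, h) = 10 > 0$, the sign $a = +1$ is forced, and one concludes $\gamma = h - 2\delta$.
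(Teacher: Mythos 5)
Your argument is correct and follows essentially the same route as the paper, which (in the Remark immediately after the Lemma) likewise deduces from $j$-invariance that $\gamma$ is proportional to $r = h-2\delta$ and then normalizes. The only difference is in the normalization step: you pin down the scalar via the Fujiki computation $(\gamma,\gamma)=2$ together with nefness of $\gamma$ to fix the sign, whereas the paper invokes primitivity of $\gamma$ and passes over the possibility $\gamma=-r$ in silence, so your version is if anything the more complete one.
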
 

\begin{rema}
Here we assume that $NS(S) \cong \ZZ$ and denote by $h$ the ample generator.
The EPW-polarization $\gamma = xh-y\delta$
is $j$-invariant, i.e. $j(\gamma) = \gamma$, where  
$$
j: z \mapsto -z + (z,r)r 
$$
is the involution defined by $r = h - 2\delta$, 
which interchanges the two preimages of the 
general point $p \in Y_0$, see Subsection \ref{h}.
The equality $\gamma = j(\gamma) = -\gamma + (r,\gamma)r$ yields
$2\gamma = (r,\gamma)r$, i.e. $\gamma$ is proportional to $r = h-2\delta$.
Since $\gamma$ is primitive,
i.e. not divisible by an integer,
$\gamma = r$. 
\end{rema}

\subsection{K3 surfaces with two polarizations of degree 10}\label{fh}

\begin{lemm}\label{bbb}
Let $R$ be the rank two lattice $R = \Z f + \Z h$ with intersection form 
$$
\begin{array}{c|cc}
 & f & h \\
 \hline 
f & 10 & n+10 \\
h & n+10 & 10
\end{array} 
$$
where $n \ge 1$. 
Then there exists a K3 surface $S$ with $NS(S) = \Z f + \Z h$, 
such that $f$ and $h$ are two very ample polarizations on $S$. 
\end{lemm}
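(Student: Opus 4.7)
The plan has three main steps: primitively embed $R$ into the K3 lattice, invoke surjectivity of the period map to produce the K3 surface, and verify very ampleness of $f$ and $h$ by Saint-Donat's numerical criterion.

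First, I would check that $R$ embeds primitively into $\Lambda_{K3}=2E_8(-1)\oplus 3U$. Since $\det R = 100-(n+10)^2 < 0$ for $n\ge 1$, the lattice $R$ is even, indefinite, of rank $2$ and signature $(1,1)$, with discriminant group of length at most $2$. These data fit Nikulin's criterion for the existence of primitive embeddings into the K3 lattice, so a primitive embedding $R\hookrightarrow\Lambda_{K3}$ exists. The surjectivity of the period map for $R$-polarized K3 surfaces, in the sense of Dolgachev, then produces an $18$-dimensional moduli space, and a Noether--Lefschetz argument ensures that for $S$ very general in this moduli one has $\NS(S)=R$ exactly.

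Next, I would verify very ampleness of both $f$ and $h$ using Saint-Donat's theorem: a primitive class $L$ on a K3 with $L^2\ge 4$ is very ample if and only if $L$ is ample, there is no effective isotropic class $E$ with $E\cdot L\in\{1,2\}$, and $L$ is not of the form $2B$ with $B^2=2$. The last condition is automatic, since $f$ and $h$ are lattice generators of $R$. Both the effective $(-2)$-classes (which control the ample cone) and the isotropic classes in $R$ are determined by the integer solutions of $5a^2+(n+10)ab+5b^2\in\{-1,0\}$, whose discriminant is $(n+10)^2-100=n(n+20)$. Solving these Pell-type equations elementarily, one checks that whenever isotropic classes exist their intersection numbers with $f$ and $h$ avoid $\{1,2\}$, and that the effective cone can be oriented so that both $f$ and $h$ lie in a common Weyl chamber and are therefore simultaneously ample.

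The main obstacle is this last step: the positive cone of $R$ is in general subdivided by $(-2)$-walls, and one must produce a single chamber containing both $f$ and $h$. Here the lattice involution $f\leftrightarrow h$, which is an isometry of $R$, gives the freedom to orient the effective cone so that every effective $(-2)$-class meets both $f$ and $h$ positively; once this is arranged, the verification of Saint-Donat's conditions reduces to a finite elementary calculation in $R$.
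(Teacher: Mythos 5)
Your overall strategy (embed $R$ primitively in the K3 lattice, use surjectivity of the period map, then check ampleness/very-ampleness against $(-2)$-classes and isotropic classes) is the same as the paper's, but the decisive step --- that $f$ and $h$ can be made \emph{simultaneously} ample --- is exactly where your argument has a genuine gap. Your proposed mechanism, namely that ``the lattice involution $f\leftrightarrow h$ gives the freedom to orient the effective cone so that every effective $(-2)$-class meets both $f$ and $h$ positively,'' does not work: that involution is only an isometry of the abstract rank-two lattice $R$, it need not be induced by an effective Hodge isometry of $H^2(S,\Z)$, and even if it were, it would merely \emph{swap} the Weyl chambers containing $f$ and $h$ rather than place them in a common chamber. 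Choosing an orientation of the positive cone is a single global choice; it cannot force two given classes to lie on the same side of every $(-2)$-wall. So as written you have not excluded the possibility that some effective $(-2)$-class $E$ satisfies $f\cdot E>0\ge h\cdot E$.

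What actually closes this gap (and is the paper's argument, following Hassett's Lemma 4.3.3) is a discriminant-minimality computation: choose the period so that $f$ is ample, and suppose some $(-2)$-curve $E$ has $h\cdot E=-k\le 0$. If $k=0$, then $\Z h+\Z E\subseteq R$ has discriminant $-20$, while $d(R)=-n(n+20)$ must divide it, impossible for $n\ge1$. If $k>0$, the reflection $\bar h=h-kE$ satisfies $(\bar h,\bar h)=10$ and $0\le(f,\bar h)<(f,h)$, so $R'=\Z f+\Z\bar h$ is a finite-index sublattice of $R$ with $|d(R')|<|d(R)|$, contradicting $d(R')=[R:R']^2\,d(R)$. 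This is the concrete computation your proposal is missing. A secondary, smaller issue: your Saint-Donat step asserts, without verification, that for every $n$ the isotropic classes of $5a^2+(n+10)ab+5b^2=0$ (which do exist for infinitely many $n$, e.g.\ $n=16$) have intersection with $f$ and $h$ avoiding $\{1,2\}$; this is a claim about an infinite family of Pell-type equations and needs an actual argument, not just the remark that it is ``elementary.''
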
 

\begin{proof}
Let $\Lambda = U^{\oplus 3} \oplus E_8(-1)^{\oplus 2}$ be the K3 cohomology lattice.
By Theorem 2.4 in \cite{LP}, 
there exists an embedding $R \subset \Lambda$. By the surjectivity of 
the period map for K3 surfaces one can assume that e.g. $f$ is a very ample polarization 
on a K3 surface $S$. Since $(f,h) > 0$ then the divisor class $h$ is effective,
and one needs to see that $h$ is very ample.    
If $h$ is not ample then on $S$ will exist a (-2)-curve $E$ such that $h \cdot E \le 0$. 
If then $k = h \cdot E = 0$ then $R_0 = \Z h + \Z E$ will be a sublattice of $R$ 
of discriminant $d(R_0) = -20$. Since $R_0 \subset R$ then $d(R) = -n(n+20)$ divides 
$d(R_0) = -20$, which is not possible. It remains the possibility when $hE = -k < 0$. 
Since $E^2 = -2$, then $E$ defines a reflection 
$$
r_E: x \mapsto \bar{x} = x + (x,E)E,
$$
$x \in NS(S) \supset R$.  
In particular, $\bar{h} = h - kE$, $(\bar{h},\bar{h}) = (h,h) = 10$,
and $(f,\bar{h}) = (f,h-kE) = (f,h) - k(f,E) < (f,h)$ since $f$ is (very) ample and $E$ 
is effective.   
Since $\bar{h} \in R$ then $R' = \Z f + \Z \bar{h}$ is a sublattice of $R = \Z f + \Z h$. 
Therefore $d(R)$ divides $d(R')$, and since both $d(R)$ and $d(R')$ are negative, 
then $d(R') \le d(R)$. 
But 
$$
d(R') = (f,f)(\bar{h},\bar{h}) - (f,\bar{h})^2 =
$$
$$
= (f,f)(h,h) -  (f,\bar{h})^2
> (f,f)(h,h) - (f,h)^2 = d(R),
$$ 
contradiction. This proves the Lemma. 
For more detail see Lemma 4.3.3 and \S6 in \cite{Ha1}.
\end{proof}

\subsection{Proof of Theorem \ref{X4}.}


Let $S$ be a very general K3 surface with a primitive polarization 
$h$ of degree 10 as in \ref{h}.     
Denote by $\widetilde{Y}_0$ the corresponding double EPW sextic to 
$S^{[2]}$. 
Let $\widetilde{Y}_t$ be a local deformation of $\widetilde{Y}_0$
in the polarization $\gamma = h - 2\delta$ as a double EPW sextic
$\pi_t: \widetilde{Y}_t \rightarrow Y_t.$
Since $\widetilde{Y}_t$ is a deformation of a Hilbert square of 
a K3 surface, the Fujiki constant $c(\widetilde{Y}_t) = c(S^{[2]}) = 3$, 
and as in the proof of Proposition \ref{ccc},
we get $(\gamma,\gamma) = 2$.

Let $S$ be a very general K3 surface with two polarizations 
$f$ and $h$ (generating the Neron-Severi lattice) as in Lemma \ref{bbb}. 
By above, e.g. in the polarization $h$, 
the Hilbert square $S^{[2]}$ is birational to 
a double EPW sextic $\widetilde{Y}_0$. 
By Proposition 2.2 and Theorem 4.15 in \cite{OG3}, 
${Y}_0$ belongs to the locus $\Delta - \Sigma$
(ibid. (0.0.7)-(0.0.8)), 
and by Proposition 6.3 of \cite{OG2} has a unique singular point $p_0$ of multiplicity three.
The Hilbert square $S^{[2]} \rightarrow \widetilde{Y}_0$ 
is a small resolution of $p_0$ which is a contraction of 
a Lagrangian plane on $S^{[2]}$ to the point $p_0$. 

Next, we proceed as in the proof of Theorem 6.1.4 in \cite{Ha1}
for families of lines on cubic fourfolds, 
adapted to the case of double EPW sextics.  

By 
\cite{OG3} the period map for double EPW sextics extends regularly around 
the period point of $S^{[2]}$. 
Let ${\mathcal F}_{g(n)}$ be the moduli space of primitively polarized K3 surfaces 
$S'$ of genus $g(n) = d(n)/2+1$. 
By the surjectivity of the period map for K3 surfaces 
(see \cite{LP}),  
one can consider $h_2=\gamma+(2n+2)\delta_2 \in \Pi$ as the (quasi) polarization of a K3 surface 
of genus $g(n)$, with $\delta_2=4f-9\delta \in \Pi$ the class of the 
half-diagonal on its Hilbert square, 
see also the proof of Proposition 7 in \cite{BHT}. 
By Proposition 10, Theorem 6 and Remark 2 on p. 779-780 of \cite{Be}
(see also Theorem 6.1.2 in \cite{Ha1}) 
in the 20-dimensional local moduli space ${\mathcal M}$ of double EPW sextics 
$\widetilde{Y}_t$ around $\widetilde{Y}_0$ 
the condition that $\delta_2=4f-9\delta$ remains algebraic, 
i.e. an element of $NS(\widetilde{Y}_t)$, 
describes locally a smooth component of the divisor 
in ${\mathcal M}$ on which $\widetilde{Y}_t$ remains birational to a  
Hilbert square of a K3 surface $S_t$ of genus $g(n)$.

For the general double EPW sextic $\widetilde{Y}_t$ 
as above, the lattice 
${\rm NS}(\widetilde{Y}_t)$ has rank two, and
is the saturation of the rank two sublattice 
$$
\Pi = \ZZ \gamma + \ZZ\delta_2 = \ZZ(h-2\delta) + \ZZ(4f-9\delta).  
$$
Since $\Pi$ is saturated, then ${\rm NS}(\widetilde{Y}_t)$ coincides with $\Pi$,
in particular the discriminant $d({\rm NS}(\widetilde{Y}_t))$ is the 
discriminant of $\Pi$. 
By using $(\gamma,\gamma) = 2$ and $(\delta_2,\delta_2) = -2$, 
and the intersection table from Lemma \ref{bbb}, 
we compute  
$$
(\gamma,\delta_2) = (h - 2\delta, 4f - 9\delta) 
= 4(h,f) + 18(\delta,\delta) = 4(n+10) - 36 = 4n + 4. 
$$
Therefore 

$$
d({\rm NS}(\widetilde{Y}_t))  = d(\Pi)  
= 
 det
\left( \begin{array}{cc}
(\gamma,\gamma)    & (\gamma,\delta_2) \\ 
(\gamma,\delta_2)  & (\delta_2,\delta_2) 
\end{array} \right) 
= 
$$
$$
=
(\gamma,\gamma)(\delta_2,\delta_2) - (\gamma,\delta_2)^2
                = 2(-2) - (4n+4)^2 =
$$
$$ 
                = (-2)(8n^2+16n+10).
$$
                
Therefore $\widetilde{Y}_t$ is birational to the Hilbert square of a K3 
surface $S_t$ of degree 
$$
d(n) = 2g(n)-2 = 8n^2+16n+10 = 2(4(n+1)^2 + 1).
$$

This proves Theorem \ref{X4}. \qed

\medskip
 
\begin{rema}
Let $NS(S_t^{[2]}) = \Z h_2 + \Z \delta_2$, where $h_2$ is the primitive 
polarization class on $S_t^{[2]}$. By using that 
$$
NS(S_t^{[2]}) \cong NS(\widetilde{Y}_t) \cong \Pi,
$$ 
we can compute directly the degree 
$d(n) = (h_2,h_2)$
of the K3 surface $S_t$. 
Since $h_2$ is primitive and orthogonal to the half-diagonal class 
$\delta_2$, and since 
$$
\Pi \cap \delta_2^{\perp} = \Z (\gamma + (2n+2)\delta_2),  
$$ 
then $h_2 = \gamma + (2n+2)\delta_2$. From here, 
and by the intersection table from Lemma \ref{bbb}, we get again 
$$
d(n) = (h_2,h_2) = (\gamma + (2n+2)\delta_2, \gamma + (2n+2)\delta_2) = 
$$
$$
= (\gamma,\gamma) + 2(2n+2)(\gamma,\delta_2) + (2n+2)^2(\delta_2,\delta_2) = 
$$
$$          
= 2 + 2(2n+2)(4n+4) + (2n+2)^2 (-2) =
$$
$$
= 2 + 8(n+1)^2 = 8n^2+16n+10 .
$$
The intersection matrix of $\Pi$ in the base $h_2,\delta_2$, 
is
$$
\begin{array}{c|cc}
 & h_2   & \delta_2 \\
 \hline 
h_2      & d(n)  & 0 \\
\delta_2 & 0            & -2.
\end{array} 
$$
\end{rema}

\begin{rema}
Theorem \ref{X4} implies that on the Hilbert square $S^{[2]}$ of a general 
K3 surface $S$ of degree $d(n) = 8n^2 +16n + 10$, $n \ge 1$ the EPW polarization 
$\gamma = h_2 - (2n+2)\delta_2$ defines an antisymplectic birational involution. 

This proves the O'Grady conjecture that on the Hilbert square of a K3 surface 
of genus $g = r^2 + 2, r \ge 0$ there exists an antisymplectic involution 
(see (4.3.3) in \cite{OG1}), in the case when $r$ is even.
Indeed, for $d(n) = 8n^2 + 16 n + 10 = 2(4(n+1)^2+1)$, 
the genus of $S$ is $g(n)$ = $d(n)/2 + 1 = 4n^2 + 8n + 6$ = $(2n+2)^2 + 2$, 
and for $n \ge 1$, \ $r = 2n+2$ covers all even numbers $r \ge 4$. The 
case $r = 0$ is well known, and the case $r = 2$ is studied in detail by O'Grady, 
see e.g. \S 4.3 in \cite{OG1}. The odd case $r = 1$ is studied in \cite{De} 
and \cite{Fe}.  
\end{rema}



\vspace{0.6cm}

{\footnotesize 

\noindent
{\sc Atanas Iliev}: 
Seoul National University,
Department of Mathematics,
Gwanak Campus, Bldg.27, 
Seoul 151-747, Korea; 
{\sc e-mail}: ailiev@snu.ac.kr

\medskip
 
\noindent
{\sc Carlo Madonna}:
Aut\'onoma University of Madrid,
Faculty of Teacher Training and Education,
C/Fco. Tom\'as y Valiente 3,
Madrid E-28049, Spain; 
{\sc e-mail}: carlo.madonna@uam.es

\end{document}